\numberwithin{equation}{subsection}
\newtheorem{thm}{Theorem}[subsection]
\newtheorem{lem}[thm]{Lemma}
\newtheorem{prop}[thm]{Proposition}
\theoremstyle{definition}
\newtheorem*{setup*}{Setup}
\newtheorem{example}[thm]{Example}
\newtheorem{defn}[thm]{Definition}
\newtheorem{rmk}[thm]{Remark}
\newtheorem{question}[thm]{Question}
\newtheorem*{conjecture*}{Conjecture}
\newtheorem*{theorem*}{Theorem}
\newtheorem*{claim*}{Claim}
\newtheorem*{corollary*}{Corollary}
\newtheorem*{notation*}{Notation}
\DeclareSymbolFont{bbold}{U}{bbold}{m}{n}
\DeclareSymbolFontAlphabet{\mathbbold}{bbold}
\def\Spec{{\rm Spec}}
\def\K3{{\rm K3}}
\def\Fil{{\rm Fil}}
\def\Fpbar{{\overline{\mb{F}}_p}}
\def\det{{\rm det}}
\def\cris{{\rm cris}}
\mathchardef\mhyphen="2D
\newcommand{\Qp}{\mathbb{Q}_p}
\newcommand{\Qpbar}{\overline{\mathbb Q}_p}
\newcommand{\Qlbar}{\overline{\mathbb{Q}}_{\ell}}
\newcommand{\Z}{\mathbb Z}
\newcommand{\Q}{\mathbb Q}
\newcommand{\MF}{\mathbf{MF}}
\newcommand{\colim@}[2]{%
  \vtop{\m@th\ialign{##\cr
    \hfil$#1\operator@font colim$\hfil\cr
    \noalign{\nointerlineskip\kern1.5\ex@}#2\cr
    \noalign{\nointerlineskip\kern-\ex@}\cr}}%
}
\newcommand{\colim}{%
  \mathop{\mathpalette\colim@{}}\nmlimits@
}
\newcommand\nc{\newcommand}
\begin{document}
	\title{Local systems which do not come from abelian varieties}
	\author{Paul Brommer-Wierig}
	\email{p.brommer-wierig@hu-berlin.de} 
    \address{Humboldt Universität Berlin,
		Institut für Mathematik- Alg.Geo.,
		Rudower Chaussee 25
		Berlin, Germany}
	
	\author{Yeuk Hay Joshua Lam}
	\email{joshua.lam@hu-berlin.de}
	\address{Humboldt Universität Berlin,
		Institut für Mathematik- Alg.Geo.,
		Rudower Chaussee 25
		Berlin, Germany}
	\date{\today}
	
	\begin{abstract}
		For each smooth curve over a finite field, after puncturing it at finitely many points, we construct local systems on it of geometric origin which do not come from a family of abelian varieties. We do so by proving a criterion which must be satisfied by local systems which do come from abelian varieties, inspired by an analogous Hodge theoretic criterion in characteristic zero.
	\end{abstract}
	
		\maketitle
	
	\section{Introduction}
		
		\subsection{Statement of the result}
			Let $X$ be a geometrically connected scheme of finite type over a finite field $\mathbb{F}_q$ and let $\ell$ be a prime number different from $p=\mathrm{char}(\mathbb{F}_q)$. A $\overline{\Q}_\ell$-local system on $X$ is a lisse $\overline{\Q}_\ell$-sheaf on $X$ or equivalently a continuous, finite-dimensional $\Qlbar$-linear representation of $\pi^\textnormal{\'{e}t}_1(X,\bar x)$ for some choice of geometric point $\bar x$ of $X$. In many ways, a $\Qlbar$-local system is the positive characteristic analogue of a variation of Hodge structure (VHS). One way of constructing $\overline{\Q}_\ell$-local systems is the following:

            \begin{example}\label{ex: geometric-construction-of-local-systems}
                Let $f\colon Y\to X$ be a smooth proper morphism. Then for each  $i\geq 0$,  the higher étale direct image $\mathrm{R}^if_{\textnormal{\'{e}t},\ast}\overline{\Q}_\ell$ gives a  $\overline{\Q}_\ell$-local system on $X$. 
            \end{example}

            From the local systems in \cref{ex: geometric-construction-of-local-systems}, one can of course generate  many more local systems, by taking duals, direct sums, tensor products, sub- and quotient-objects--we refer to such local systems as being  \enquote{of geometric origin}.  Given $X/\mb{F}_q$, one basic question is whether one can generate every $\Qlbar$-local system of geometric origin from a relatively small collection of objects. As a motivating example, the combination of the Tate conjecture and Honda--Tate theory imply that, when $X=\Spec (\mb{F}_q)$, every $\Qlbar$-sheaf which comes from geometry is generated by that attached to an abelian variety. Our  main result says that this is a peculiarity of the situation over a point, and is not true over a positive dimensional base:
			
			\begin{thm}\label{thm: main-result}
				Let $C$ be a smooth projective curve over a finite field of characteristic $p$. Let $\ell$ be a prime number different from $p$. There exists a finite set of points $S\subseteq C$ and a $\overline{\Q}_\ell$-local system on $C\setminus S$, of geometric origin, which does not come  from a family of abelian varieties over $C\setminus S$.
			\end{thm}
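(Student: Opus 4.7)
The plan follows the two-step strategy suggested by the abstract: first, extract a necessary $p$-adic criterion that every local system ``coming from'' a family of abelian varieties over $C\setminus S$ must satisfy; second, construct an explicit local system violating the criterion.

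For the first step, given an abelian scheme $f\colon A\to C\setminus S$, the local system $\mathcal{L}=R^1f_{\text{\'et},\ast}\overline{\mathbb{Q}}_\ell$ admits a $p$-adic companion by the work of Abe--Esnault--Kedlaya on Deligne's companions conjecture, and this companion is identified with $R^1f_{\cris,\ast}\mathcal{O}$ as an overconvergent $F$-isocrystal on $C\setminus S$. Two features distinguish such an ``abelian-type'' $F$-isocrystal: (a) its Newton slopes at every closed point lie in $[0,1]$, and (b) after Kedlaya's semistable reduction and log-extension to $C$, the $p$-adic Hodge-theoretic invariants of the residue at each puncture $s\in S$ (the Hodge--Tate weights of the associated crystalline representation of the local Galois group) lie in $\{0,1\}$. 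Both properties are preserved under sub- and quotient-objects of $R^1f_{\text{\'et},\ast}\overline{\mathbb{Q}}_\ell$, so I would package them as a criterion $(\ast)$ applicable to any $\mathcal{L}$ coming from a family of abelian varieties.

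For the second step, I would construct a rank-$2$ overconvergent $F$-isocrystal $\mathcal{E}$ on a suitable $C\setminus S$ (e.g.\ $\mathbb{P}^1$ minus finitely many points, or a cyclic cover thereof) whose Hodge--Tate weight at some puncture is forced to lie outside $\{0,1\}$. A natural route is to prescribe rigid local monodromy data at each puncture and build $\mathcal{E}$ along the lines of hypergeometric or Kummer-type $F$-isocrystals; alternatively, one can produce the required $\mathcal{E}$ directly as a Frobenius-structure-bearing rank-$2$ log-connection with controlled residue eigenvalues at a chosen puncture. Running the companions correspondence in reverse then yields the desired $\overline{\mathbb{Q}}_\ell$-local system $\mathcal{L}$ on $C\setminus S$, whose $p$-adic companion fails $(\ast)$ and which therefore cannot come from a family of abelian varieties. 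The hypothesis $p\neq 5$ most likely enters because the explicit construction requires an auxiliary cyclic cover or a level-$5$ structure whose good reduction requires $p$ to be invertible.

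The main obstacle is the analysis at the punctures. Inside $C\setminus S$, the slope criterion is essentially pointwise and is a direct consequence of the known shape of $H^1_{\cris}$ of an abelian scheme. At the boundary, however, one needs the $p$-adic local monodromy theorem (Kedlaya--Mebkhout--Andr\'e) and Kedlaya's semistable reduction for overconvergent $F$-isocrystals in order to make sense of, and then control, the Hodge--Tate weights of the residue; then one must check that the criterion is genuinely necessary for $\mathcal{L}$ to come from an abelian scheme, which relies on the abelian family being, up to a finite base change, semistably degenerate at each puncture. A secondary difficulty is verifying that the constructed $\mathcal{E}$ actually descends to an $F$-isocrystal over some $\mathbb{F}_q$-curve (rather than only over a transcendental base), which I would handle using rigidity of the prescribed monodromy data in the rank-$2$ case.
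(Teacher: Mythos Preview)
Your proposal has a genuine gap at the level of the criterion itself. By the paper's \cref{defn: come-from-av}, ``coming from a family of abelian varieties'' means lying in the \emph{Tannakian} subcategory $\langle R^1f_*\overline{\mathbb{Q}}_\ell\rangle$, which is closed not only under sub- and quotient-objects but also under tensor products and duals. Your criteria (a) and (b) are \emph{not} preserved by these operations: if $\mathcal{E}=R^1f_{\cris,*}$ for an ordinary abelian scheme, then $\mathrm{Sym}^2\mathcal{E}$ already has a Newton slope equal to $2$, and likewise its Hodge--Tate weights spill outside $\{0,1\}$. So neither the pointwise slope condition nor the boundary Hodge--Tate condition gives an obstruction for objects of $\langle R^1f_*\overline{\mathbb{Q}}_\ell\rangle$. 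You have only shown that your $(\ast)$ holds for subquotients of $R^1f_*$ itself, which is a much smaller class.

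The paper's remedy is to pass to an invariant that \emph{is} intrinsic to the Tannakian category: the monodromy Lie algebra $\underline{\mathfrak{g}}_{\mathcal{E}}$, viewed as an $F$-isocrystal via the adjoint representation. An inclusion $\langle\mathcal{E}\rangle\subset\langle R^1f_*\overline{\mathbb{Q}}_p\rangle$ forces a surjection $\underline{\mathfrak{g}}_{R^1f_*}\twoheadrightarrow\underline{\mathfrak{g}}_{\mathcal{E}}$, and since $\underline{\mathfrak{g}}_{R^1f_*}\subset\mathrm{End}(R^1f_*)$ has all Newton slopes in $[-1,1]$ by Mazur, the same holds for $\underline{\mathfrak{g}}_{\mathcal{E}}$ (\cref{prop: criterion-crystalline}). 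This is the correct $p$-adic shadow of the Hodge-theoretic fact that the Mumford--Tate Lie algebra of an abelian variety has Hodge type $\{(-1,1),(0,0),(1,-1)\}$. Your boundary analysis at the punctures is not needed at all.

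Your rank-$2$ construction is also problematic: in rank $2$ the monodromy Lie algebra has dimension at most $4$, and one checks that the slope bound on $\underline{\mathfrak{g}}_{\mathcal{E}}$ is never violated; indeed it is conjectured (Krishnamoorthy) that every rank-$2$ local system \emph{does} come from abelian varieties. The paper instead uses the rank-$4$ mirror-quintic $F$-isocrystal $\mathcal{E}_p=(R^3\pi_*\overline{\mathbb{Q}}_p)^H$ on $\mathbb{P}^1\setminus\{0,1,\infty\}$: it has geometric monodromy $\mathbf{Sp}_4$ (so $\dim\mathfrak{g}^{\mathrm{geo}}=10$) and generic Newton slopes $0,1,2,3$, and a short dimension count shows $\underline{\mathfrak{g}}^{\mathrm{geo}}_{\mathcal{E}_p}$ must acquire a slope $<-1$. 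The hypothesis $p\neq 5$ enters because the Dwork family is defined over $\mathbb{Z}[1/5]$. The passage to an arbitrary curve $C$ is then by Belyi in characteristic $p$ and pullback.
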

   
            For the precise definition of \enquote{coming from a family of abelian varieties}, see \cref{defn: come-from-av}.

        \subsection{Discussions: \texorpdfstring{$\mb{F}_q$}{Fq} versus \texorpdfstring{$\mb{C}$}{C}}
            The situation over $\mb{F}_q$ is somewhat trickier  than that over $\mb{C}$, as we now explain. As alluded to above, it is expected that, over the point $X=\Spec(\mb{F}_q)$, the cohomology of every variety is generated from that of an abelian variety. On the other hand,   there certainly exists varieties over $\mb{C}$ whose Hodge structures are provably not generated from the Hodge structures of abelian varieties; this seems to have been first observed by Deligne \cite[\S 7.6]{Deligne-La-conjecture-de-Weil-pour-les-surfaces-K3}. 
            Schematically, we have the inclusions (recall that VHS stands for variations of Hodge structure):
            \[
                \bigg\{ 
                \parbox{8em}{VHS coming from\\abelian varieties}  
                \bigg\} \subset 
                \bigg\{\parbox{8em}{\center VHS on \\Shimura varieties}  
                \bigg\} \subset 
                \bigg\{
                \parbox{5.5em}{general VHS}  
                \bigg\}.
            \]
            That is, VHS coming from families of abelian varieties are a (strict) subset of those on Shimura varieties, which are in turn a tiny fraction of general VHS; the VHS coming from abelian varieties are therefore, in some sense, the simplest kind. Similarly, over $\mb{F}_q$, the $\Qlbar$-local systems coming from abelian varieties are arguably the simplest local systems, and \cref{thm: main-result} says that one cannot build all $\Qlbar$-local systems from these simple building blocks. Presumably, our criterion also shows that most local systems  do not come from Shimura varieties, although this question is not very well formulated since there is not yet a  theory of special fibers of Shimura varieties of exceptional type parallel to that of Shimura varieties of abelian type\footnote{although see \cite{Bakker-Shankar-Tsimerman-Integral-Canonical-Models-of-Exceptional-Shimura-Varieties} for some spectacular work in this direction}. 

            Over a general variety $X/\mb{F}_q$, we find it an extremely interesting question to try to classify local systems which come from a family of abelian varieties, or perhaps from a more general Shimura variety. A more modest goal is to find more criteria to decide whether a given local system comes from abelian varieties. 

            For example, for $X/\mb{F}_q$, it is not known whether any rank two $\Qlbar$-local system comes from a family of abelian varieties, though this is conjectured to be the case by Krishnamoorthy \cite[Conjecture 1.2]{Krishnamoorthy-Rank-2-Local-Systems-and-Barsotti-Tate-Groups}\footnote{in slightly less generality}. Note that, over $\mb{C}$, any rank two local system of geometric origin (or $\mc{O}_K$-VHS for $K$ some  number field) does indeed come from a family of abelian varieties.

            Finally, we expect that, for each fixed curve $X/\mb{F}_q$ with $\chi(X)<0$,  most local systems on $X$ of geometric origin do not come from abelian varieties. For example, in \cref{thm: main-result}, we only prove a statement after removing a finite set of points $S$.  This is presumably unnecessary, but proving it seems to require some new inputs, and we state it as a question:

            \begin{question}
                Let $X/\mb{F}_q$ be a smooth curve of negative Euler characteristic. Does there exist a local system of geometric origin on $X$, which does not come from a family of abelian varieties?
            \end{question}
        
        \subsection{Sketch of proof}
        We give some ideas of the proof of \cref{thm: main-result}, by giving an example of a local system of geometric origin which provably does not come from an abelian scheme. The case of arbitrary curves is deduced by pulling back this local system along an appropriate map. 
        
            We first recall the situation over $\mb{C}$. The analogous question is: given a $\mb{Q}$-Hodge structure $\mb{V}$, how can we tell if it comes from an abelian variety? More precisely, when is $\mb{V}$ in the Tannakian category $\langle \mathrm{H}^1(A,\Q)\rangle$ for an abelian variety $A/\mb{C}$?

            One criterion is as follows: we consider the Mumford--Tate group $\mbf{G}_{\mb{V}}$, and its Lie algebra $\mf{g}_{\mb{V}}$. The latter carries a Hodge structure, which we denote by $\underline{\mf{g}}_{\mb{V}}$ to underline this additional structure. If $\mb{V}$ comes from an abelian variety, then the Hodge decomposition must take the form 
            \begin{equation}\label{eqn:mt-decomp}
                \underline{\mf{g}}_{\mb{V}}\otimes \mb{C}\simeq \mf{g}^{-1,1}\oplus \mf{g}^{0,0}\oplus \mf{g}^{1,-1};  
            \end{equation}
            that is, the Hodge degrees are concentrated in $(-1,1), (0,0), (1,-1)$. This follows from the fact that, if $\mb{V}$ is generated from the Hodge structure of an abelian variety $A$, then there is a surjection of $\mb{Q}$-Hodge structures 
            \[
                \underline{\mf{g}}_A\rightarrow \underline{\mf{g}}_{\mb{V}},
            \]
            where $\underline{\mf{g}}_A$ denotes the Mumford--Tate Lie algebra of $A$, equipped with its canonical Hodge structure. Even better,  this necessary condition is almost sufficient. Indeed, if \eqref{eqn:mt-decomp} holds, then $\mb{V}$ corresponds to a point on a Shimura variety.

            For a local system $\mc{E}$ on a curve  $X/\mb{F}_q$, we can similarly consider the Lie algebra $\mf{g}_{\mc{E}}$ of the monodromy group, and again we may view it as a local system, which we denote by $\underline{\mf{g}}_{\mc{E}}$. However, there is no analogue of the Hodge decomposition \eqref{eqn:mt-decomp}. 

            Instead, we consider $p$-adic invariants, known as the \emph{slopes} of $\underline{\mf{g}}_{\mc{E}}$, at a closed point $x\in X$. These are rational numbers given by the $p$-adic valuations of eigenvalues of the Frobenius (at $x$) action on $\underline{\mf{g}}_{\mc{E}}$.

            
            The upshot is that, if $\mc{E}$ comes from an abelian scheme, then the slopes of $\underline{\mf{g}}_{\mc{E}}$ are heavily constrained: they must lie in $[-1, 1]$, in  analogy to the constraint provided by  \eqref{eqn:mt-decomp}. This provides a necessary criterion, as recorded  in \cref{prop: criterion-crystalline}.

            It remains to construct a local system of geometric origin over a curve which violates our criterion. We do this by analyzing the mirror quintic family of Calabi--Yau threefolds over $\mb{P}^1_{\mb{F}_q}\setminus \{0,1,\infty\}$ and using some $p$-adic Hodge theory.
          
		\subsection{Notation}
	        Throughout, we fix a prime number $p$, a finite field $\mathbb{F}_q$ of characteristic $p$ and an algebraic closure $\Fpbar$ of $\mathbb{F}_q$. A variety over a field $K$ is by definition a geometrically connected, separated scheme of finite type over $K$ and a curve is a one-dimensional variety. We will always write $X$ for a variety over $\mathbb{F}_q$ and $\ell$ denotes a prime number different from $p$.
         
		\subsection{Acknowledgments}
			The second-named author thanks Sasha Petrov for an interesting discussion on this subject in CIRM, Luminy, in the summer of 2022. We also thank Greg Baldi, Nazim Khelifa, Bruno Klingler, Raju Krishnamoorthy and Daniel Litt for their insightful comments. We are especially grateful to the referees for their extremely careful reading and helpful suggestions.
            
            During the course of this work, Brommer-Wierig was supported by the ERC grant \emph{TameHodge} (grant number: 101020009) and by the DFG Cluster of Excellence \emph{MATH+} (EXC-2046/1, project number: 390685689). Lam was supported by a Dirichlet Fellowship and the DFG Walter Benjamin grant \emph{Dynamics on character varieties and Hodge theory} (project number: 541272769). 
		
	\section{Reminder on \texorpdfstring{$\ell$}{l}-adic local systems}\label{sec: coefficient}
    
		\subsection{Tannakian categories}
			For a field $K$, we denote by $\mathbf{Vec}_K$ the Tannakian category of finite-dimensional $K$-vector spaces. Let $\mathbf{C}$ be a neutral Tannakian category over $K$ and let $\omega\colon\mathbf{C}\to\mathbf{Vec}_K$ be a $K$-linear fiber functor (\cite[Def. 2.19]{Deligne-Milne-Tannakian-Categories}). The functor $\mathrm{Aut}^\otimes(\omega)$ is representable by an affine group scheme $\mathbf{G}$ over $K$ and there exists an equivalence of tensor categories of $\mathbf{C}$ with $\mathbf{Rep}_K(\mathbf{G})$, the category of finite-dimensional linear representations of $\mathbf{G}$ over $K$, see \emph{loc. cit.} Thm. 2.11. We call $\mathbf{G}$ the \emph{Tannaka group} of $\mathbf{C}$.
    
			A \emph{Tannakian subcategory} of $\mathbf{C}$ is by definition a full abelian subcategory that is stable under $\otimes$, duals and subobjects. If $\mathbf{C}'$ is a Tannakian subcategory of $\mathbf{C}$ with Tannaka group $\mathbf{G}'$, there exists a faithfully flat morphism of $K$-group schemes $\mathbf{G}\to\mathbf{G}'$, see Cor. 2.9, Prop. 2.21.a \emph{loc. cit.} In particular $\mathbf{G}\to\mathbf{G}'$ is an epimorphism. For an object $V$ of $\mathbf{C}$ we denote by $\langle V\rangle$ the smallest Tannakian subcategory of $\mathbf{C}$ containing $V$. It is the full subcategory of $\mathbf{C}$ generated by $V$ under $\oplus, \otimes$, duals and subobjects.

            We call the Tannaka group of $\langle V\rangle$ the \emph{monodromy group} of $V$ and we denote it by $\mathbf{G}_V$. It is a linear algebraic group over $K$, see \cite[Prop. 2.20.b]{Deligne-Milne-Tannakian-Categories}. Let $\mathfrak{g}_V$ be the Lie algebra of $\mathbf{G}_V$. Thanks to the adjoint representation $\mathrm{Ad}\colon\mathbf{G}_V\to\mathbf{GL}(\mathfrak{g}_V)$ we can consider $\mathfrak{g}_V$ as an object of $\langle V\rangle$, in particular of $\mathbf{C}$.  For emphasis, we write $\underline{\mathfrak{g}}_V$ for the object of $\mathbf{C}$ given in this way. We call $\underline{\mathfrak{g}}_V$ the \emph{monodromy Lie algebra} of $V$.

        \subsection{Monodromy of \texorpdfstring{$\ell$}{l}-adic local systems}
            We define $\mathbf{LS}(X,\overline{\Q}_\ell)$ as the category of lisse $\overline{\Q}_\ell$-sheaves on $X$. It is a $\Qlbar$-linear Tannakian category. For a geometric point $\bar x$ of $X$, it admits a fiber functor
            \[
                \omega_{\bar x,\overline{\Q}_\ell}\colon\mathbf{LS}(X,\overline{\Q}_\ell)\to\mathbf{Vec}_{\overline{\Q}_\ell},
            \]
            given by sending a lisse $\Qlbar$-sheaf to its stalk at $\bar x$.
			
			\begin{defn}\label{defn: come-from-av}
				We say a $\Qlbar$-local system $\mathcal{E}$ on $X$ \emph{comes from a family of abelian varieties} if there exists an abelian scheme $f\colon A\to X$ such that $\mathcal{E}$ is an object of $\langle\mathrm{R}^1f_{\textnormal{ét},\ast}\overline{\Q}_\ell\rangle$.
			\end{defn}
			
            Let $\mathcal{E}$ be a $\Qlbar$-local system on $X$. The \emph{monodromy group} $\mathbf{G}_\mathcal{E}$ of $\mathcal{E}$ is the Tannaka group of the Tannakian category $\langle\mathcal{E}\rangle$. On the other hand, we can also define the \emph{geometric monodromy group} $\mathbf{G}_\mathcal{E}^\mathrm{geo}$ of $\mathcal{E}$ as follows: The natural projection $\pi\colon X_{\Fpbar}=X\times_{\mathbb{F}_q}\Spec(\Fpbar)\to X$ defines via pullback a tensor functor
            \[
                \pi^\ast\colon\mathbf{LS}(X,\overline{\Q}_\ell)\to\mathbf{LS}(X_\Fpbar,\overline{\Q}_\ell).
            \]
            In terms of representations, $\pi^\ast$ corresponds to the functor restricting a continuous, finite-dimensional $\Qlbar$-linear representation of $\pi_1^\textnormal{ét}(X,\bar x)$ to the normal subgroup $\pi_1^\textnormal{ét}(X_\Fpbar,\bar x)$.

            Finally, $\mathbf{G}_\mathcal{E}^\mathrm{geo}$ is the Tannaka group of the Tannakian subcategory generated by $\pi^\ast\mc{E}$ inside of $\mathbf{LS}(X_\Fpbar,\overline{\Q}_\ell)$. Let $\mf{g}^{\mathrm{geo}}_{\mc{E}}$ denote the Lie algebra of $\mathbf{G}_\mathcal{E}^\mathrm{geo}$. Note that $\mbf{G}^{\mathrm{geo}}_{\mc{E}}$ is a normal subgroup of $\mbf{G}_{\mc{E}}$, and hence $\mf{g}^{\mathrm{geo}}_{\mc{E}}$ has an action of $\mbf{G}_{\mc{E}}$. It may therefore be viewed as a $\Qlbar$-local system on $X$. We denote this $\Qlbar$-local system by $\underline{\mf{g}}^{\mathrm{geo}}_{\mc{E}}$.
   
	\section{Proof of \texorpdfstring{\cref{thm: main-result}}{the result}}\label{sec: proof}
 
		\subsection{The criterion}\label{sec: criterion}	
			We now state and prove the criterion alluded to in the introduction. We first set some notation for this subsection: We continue to write $X$ for a variety over a finite field $\mathbb{F}_q$ of characteristic $p$. We will write $x$ for a $\mathbb{F}_{q^r}$-point of $X$ and $\bar x$ for a geometric point lying over $x$. By $\mc{E}$ we denote a $\Qlbar$-local system on $X$. Finally, we fix a valuation $v$ on $\overline{\Q}$ such that $v(p)=1$.
            
            The stalk $\mathcal{E}_{\bar x}$ admits an action by the geometric $q^r$-Frobenius, denoted by $F_{\bar x}$. In all our applications,  $\mathcal{E}$ will be  \emph{algebraic}, i.e. for every closed point $x$, the eigenvalues of $F_{\bar x}$ are in $\overline{\Q}$. We define the multiset of \emph{$v$-slopes} at $x$ as the multiset consisting of the rational numbers
            \[
                \frac{v(\lambda)}{[\kappa(x)\colon\mathbb{F}_p]},
            \]
             where $\lambda$ is an eigenvalue of $F_{\bar x}$ and $\kappa(x)$ denotes the residue field at $x$.

            \begin{lem}\label{cor: estimate-for-Newton-slope-of-abelian-scheme}
                Let $f\colon A\to X$ be an abelian scheme over $\mathbb{F}_q$. Then every $v$-slope $\alpha$ of $\mathrm{R}^1f_{\textnormal{\'{e}t},\ast}\overline{\Q}_\ell$ satisfies
                \[
                    0\leq\alpha\leq 1.
                \]
            \end{lem}

            \begin{proof}
                Because the abelian variety $A_{\bar x}$ is polarizable, we deduce that for every $v$-slope $\alpha$ of $(\mathrm{R}^1f_{\textnormal{\'{e}t},\ast}\overline{\Q}_\ell)_{\bar x}=\mathrm{H}^1_\textnormal{\'{e}t}(A_{\bar x},\overline{\Q}_\ell)$,  $1-\alpha$ is also a $v$-slope. Since every eigenvalue of $F_{\bar x}$ is an algebraic integer, we see that both $\alpha\geq 0$ and $1-\alpha\geq 0$, as required.
            \end{proof}
			
			\begin{prop}\label{prop: criterion-crystalline}
	        If $\mathcal{E}$ comes from a family of abelian varieties, then the $v$-slopes of $\underline{\mathfrak{g}}_{\mathcal{E}}$ at $x$ lie in $[-1,1]$.
	        \end{prop}
	
	       \begin{proof}
		    Assume $f\colon A\to X$ is an abelian scheme over $\mathbb{F}_q$ such that $\mathcal{E}$ is an object of the Tannakian subcategory generated by $\mathcal{E}'=\mathrm{R}^1f_{\textnormal{\'{e}t},\ast}\overline{\Q}_\ell$. The inclusion of Tannakian categories $\langle\mathcal{E}\rangle\hookrightarrow\langle\mathcal{E}'\rangle$ induces an epimorphism of monodromy groups $\mathbf{G}_\mathcal{E'}\twoheadrightarrow \mathbf{G}_\mathcal{E}$. This in turn induces an epimorphism of $\overline{\Q}_\ell$-local systems $\underline{\mathfrak{g}}_\mathcal{E'}\twoheadrightarrow \underline{\mathfrak{g}}_\mathcal{E}$. Consequently, every $v$-slope of $\underline{\mathfrak{g}}_{\mathcal{E}}$ at $x$ is a $v$-slope of $\underline{\mathfrak{g}}_{\mathcal{E'}}$ at $x$. We are thus reduced to the case of $\mathcal{E}=\mathrm{R}^1f_{\textnormal{\'{e}t},\ast}\overline{\Q}_\ell$. Let us write $V=\mathrm{H}^1_\textnormal{\'{e}t}(A_{\bar x},\overline{\Q}_\ell)=(\mathrm{R}^1f_{\textnormal{\'{e}t},\ast}\Qlbar)_{\bar x}$. The natural inclusion $\underline{\mathfrak{g}}_{\mathcal{E},\bar x}\hookrightarrow\mathrm{End}(V)=V\otimes V^\vee$ is compatible with the Frobenius action. In particular, each $v$-slope of $\underline{\mathfrak{g}}_{\mathcal{E}}$ at $x$ is of the form $\alpha-\beta$, where $\alpha$ and $\beta$ are $v$-slopes of $\mathcal{E}$ at $x$. Applying \cref{cor: estimate-for-Newton-slope-of-abelian-scheme} yields the result.
	       \end{proof}

        \begin{rmk}
            We note that \cref{prop: criterion-crystalline} has a crystalline counterpart where $\Qlbar$-local systems are replaced by overconvergent $F$-isocrystals, as we wrote in the initial version of this paper. We thank one of  the referees for pointing out that the use of $F$-isocrystals is unnecessary.
        \end{rmk}


        \subsection{Filtered \texorpdfstring{$\varphi$}{phi}-modules}
            We briefly recall the notion of a filtered $\varphi$-module. For a perfect field $k$ of characteristic $p$, write $W(k)$ for the ring of Witt vectors over $k$ and let $K=W(k)[1/p]$. Denote by $\sigma: K\rightarrow K$ the automorphism induced by the absolute Frobenius on $k$.

        \begin{defn}
            A \emph{filtered $\varphi$-module} is a triple $(D, \varphi, \Fil^{\bullet})$, where 
            \begin{enumerate}[i.]
                \item $D$ is a finite dimensional $K$-vector space, $\varphi: D\rightarrow D$ is  a $\sigma$-linear automorphism;
                \item $\Fil^{\bullet}$ is a decreasing filtration by sub $K$-vector spaces  which is separated (i.e. $\cap_{i\in \mb{Z}} \Fil^i =\{0\}$) and exhaustive (i.e. $\cup_{i\in \mb{Z}} \Fil^i=D$).
            \end{enumerate}
            A morphism of filtered $\varphi$-modules is a $K$-linear map compatible with $\varphi$ and filtrations. We denote the category of filtered $\varphi$-modules by $\MF_K$.
        \end{defn}
        
        \begin{rmk}
            Suppose  $Y/K$ is a smooth proper variety with good reduction, whose special fiber we denote by $Y_k$. It is straightforward to check that, for each $i$, the triple $(\mathrm{H}^i_{\cris}(Y_k/W(k))[1/p], \varphi, \Fil^{\bullet})$ is an object of $\MF_K$; here $\varphi$ is the crystalline Frobenius, and $\Fil^{\bullet}$ is the  filtration induced by the Hodge filtration on de Rham cohomology and the comparison isomorphism  $\mathrm{H}^i_{\cris}(Y_k/W(k))[1/p]\simeq \mathrm{H}^i_{\mathrm{dR}}(Y/K)$.
        \end{rmk}
 
    \subsection{The mirror quintic family violates \texorpdfstring{\cref{prop: criterion-crystalline}}{the criterion}}
        We first give a brief reminder on the Dwork family and the construction of the  mirror quintic family of Calabi-Yau threefolds in $\mathbb{P}^4$ following \cite{Katz-Another-Look-at-the-Dwork-Family}. Let $R$ be a $\Z[1/5]$-algebra and denote by $\mu_{5,R}=\mathrm{Spec}(R[t]/(t^5-1))$ the group-scheme of fifth roots of unity over $R$. 
        
        We consider the map of smooth schemes $\pi_R'\colon Y'\to\mathbb{P}_{R}^1\setminus(\{\infty\}\cup\mu_{5,R})$  over $R$, where the fiber above $t\in \mathbb{P}_{R}^1\setminus(\{\infty\}\cup\mu_{5,R})$ is given by the hypersurface 
        \[
        X_0^5+\cdots+X_4^5-5tX_0\cdots X_4
        \]
        in $\mb{P}_R^4$. The morphism $\pi_{R}'$ is smooth and proper by \cite[Lem. 2.1]{Katz-Another-Look-at-the-Dwork-Family}, and we refer to it as the \emph{Dwork family}. We define
        \[
        H=\{(\zeta_1,\dots,\zeta_5)\in\mu_{5,R}^5\mid\zeta_1\cdots\zeta_5=1\}/\Delta,
        \]
        where $\Delta\simeq \mu_{5, R}\hookrightarrow\mu_{5,R}^5$ is the diagonal subgroup. The group $H$ acts on the fibers of the Dwork family. The restriction of $\pi_R'$ over $\mathbb{P}^1_R\setminus(\{0,\infty\}\cup\mu_{5,R})$ is the pullback of a smooth proper morphism $\pi_R\colon Y\to\mathbb{P}^1_R\setminus\{0,1,\infty\}$ along the fifth power map. Moreover, the family $\pi_R$ also admits an action by $H$, see \cite[p. 102]{Katz-Another-Look-at-the-Dwork-Family}.
        
        We now specialize to the situation where $R=\mathbb{F}_q$ is a finite field of characteristic $p\neq 5$. For brevity, we write $\pi$ for the family $\pi_{\mathbb{F}_q}$ over $\mathbb{F}_q$. Let us define the $\Qlbar$-local system on $\mb{P}^1_{\mb{F}_q}\setminus \{0,1,\infty\}$ by 
        \begin{equation}\label{eqn: mirrorquintic}
            \mathcal{E}:=(\mathrm{R}^3\pi_{\textnormal{\'{e}t},\ast}\overline{\Q}_\ell)^H.
        \end{equation}

        \begin{rmk}
            It is expected that $\mc{E}$ is isomorphic to $R^3\check{\pi}_{*}\Qlbar$ for another family of Calabi--Yau threefolds $\check{\pi}: Z \rightarrow \mb{P}^1_{\mb{F}_q}\setminus \{0,1,\infty\}$. This is known to be true in characteristic zero, where $Z$ can be taken to be the \emph{mirror quintic} family. However, the construction involves a  crepant resolution of the singular family $Y/H$ (see \cite[Thm. 4.2.2]{Batyrev-Dual-Polyhedra-and-Mirror-Symmetry-for-Calabi-Yau-Hypersurfaces-in-Toric-Varieties}, where this is done for a much more general class of Calabi--Yau varieties), which, as far as we can tell, has not been worked out in positive characteristic.
        \end{rmk}

        \begin{lem}\label{lma: monodromy-of-Dwork-family}
            The $\Qlbar$-local system $\mathcal{E}$ has geometric monodromy $\mathbf{Sp}_{4,\Qlbar}$.
        \end{lem}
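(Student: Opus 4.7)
The plan is to first compute the geometric monodromy group of the auxiliary $\ell$-adic companion $\mathcal{E}_\ell$ for some prime $\ell\neq p$, and then transfer the result to the $p$-adic case via companion invariance. More precisely, combining \cref{lma: companion-of-dual-quintic} with the fact that $\tau$-companions of overconvergent $F$-isocrystals have isomorphic connected geometric monodromy groups (a result of D'Addezio \cite{D'Addezio-The-Monodromy-Groups-of-Lisse-Sheaves-and-Overconvergent-F-Isocrystals}), we reduce to a statement about $\ell$-adic lisse sheaves; since $\mathbf{Sp}_4$ is connected, no information is lost in this reduction.

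For the $\ell$-adic computation, recall that $\mathcal{E}_\ell$ has rank $4$ by \cite[Lem. 3.1]{Katz-Another-Look-at-the-Dwork-Family}. Since $\pi\colon Y\to \mathbb{P}^1_{\mb{F}_q}\setminus\{0,1,\infty\}$ is smooth proper of relative dimension $3$ (odd), Poincar\'e duality equips $\mathrm{R}^3\pi_*\overline{\Q}_\ell$ with a perfect alternating pairing which is preserved by the $H$-action, and hence restricts to a perfect alternating pairing on the invariants $\mathcal{E}_\ell$. This forces the geometric monodromy group of $\mathcal{E}_\ell$ to be contained in $\mathbf{Sp}_{4,\overline{\Q}_\ell}$. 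To upgrade this inclusion to an equality I would appeal to Katz's explicit analysis in \cite{Katz-Another-Look-at-the-Dwork-Family}: one identifies $\mathcal{E}_\ell$ as a rigid, hypergeometric-type local system whose local monodromy around $t=1$ is a single unipotent Jordan block of size $4$ (maximally unipotent monodromy, a well-known feature of the mirror quintic), and then applies the classical fact that any connected algebraic subgroup of $\mathbf{Sp}_4$ containing a regular unipotent element must be all of $\mathbf{Sp}_4$. Connectedness of $\mathbf{Sp}_4$ then promotes the statement about the connected component to the full geometric monodromy group.

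The main obstacle is securing the lower bound on the $\ell$-adic monodromy: while the symplectic containment is formal, the reverse inclusion depends on Katz's detailed study of local monodromies for the Dwork family, which must be checked directly in the $\ell$-adic setting over $\mb{F}_q$ (rather than merely transported from the complex-analytic picture), together with the group-theoretic classification recalled above. Once this is in place, companion invariance promotes the $\ell$-adic conclusion to $\mathbf{G}^{\mathrm{geo}}_{\mathcal{E}_p}=\mathbf{Sp}_{4,\overline{\Q}_p}$, as required.
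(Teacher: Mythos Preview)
Your overall plan matches the paper's: use D'Addezio's companion results together with \cref{lma: companion-of-dual-quintic} to reduce to the $\ell$-adic geometric monodromy, then invoke Katz. The paper carries out the $\ell$-adic step by citing \cite[Thm.~8.6]{Katz-Another-Look-at-the-Dwork-Family} directly (together with a specialisation argument to pass from the universal family over $\mb{Z}[1/5]$ to characteristic $p$), rather than by analysing local monodromies by hand.

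Your hand computation, however, has two slips. First, in the parametrisation used here the maximally unipotent monodromy sits at $\infty$, not at $t=1$; the point $t=1$ is the conifold point, where the local monodromy is a symplectic transvection (Jordan type $(2,1,1)$), not a single $4\times 4$ Jordan block. Second, and more seriously, the ``classical fact'' you invoke is false: the principal $\mathbf{SL}_2\hookrightarrow\mathbf{Sp}_4$ (via $\Sym^3$ of the standard representation) is a proper connected reductive subgroup of $\mathbf{Sp}_4$ that contains a regular unipotent and acts irreducibly, so a regular unipotent alone does not force the monodromy to be all of $\mathbf{Sp}_4$. To rule out the principal $\mathbf{SL}_2$ you would need a second piece of local data---for instance precisely the transvection at the conifold point, since every nontrivial unipotent in the principal $\mathbf{SL}_2$ is already regular of type $(4)$. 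With both corrections (and the tame specialisation needed to transport the local monodromies to characteristic $p$, which you only gesture at), your route can be completed; the paper's shortcut is simply to quote Katz's theorem, which already packages all of this analysis.
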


        \begin{proof}
            All we have to do is compute the Zariski closure of the image of the associated representation $\pi_1^\textnormal{ét}(\mb{P}^1_{\Fpbar} \setminus \{0,1,\infty\},\bar x)\to\mathrm{GL}_{4}(\overline{\Q}_\ell)$. The statement then follows from \cite[Thm. 8.6]{Katz-Another-Look-at-the-Dwork-Family} using a specialization argument. 
        \end{proof}

        We write $X=\mathbb{P}_{\mathbb{F}_q}^1\setminus\{0,1,\infty\}$ and $\pi\colon Y\to X$ as above. Let $x\in X(\mathbb{F}_{q^r})$ and let $\bar x$ be a geometric point lying over $x$. Let $F_{\bar x}$ denote geometric $q^r$-Frobenius acting on $Y_{\bar x}$.  We set $V=\mathrm{H}^3_\text{ét}(Y_{\bar{x}},\Qlbar)$, as well as $V_p=\mathrm{H}^3_\mathrm{crys}(Y_x/W(\mathbb{F}_{q ^r}))\otimes_{W(\mathbb{F}_{q^r})}\Qpbar$, and write $\varphi$ for its crystalline Frobenius endomorphism. Notice that $V^{H}$ and $V_p^H$ inherit Frobenius actions from $V$ and $V_p$ respectively, as $H$ acts by algebraic maps. We define
        \[
            P_{\textnormal{ét}}(T)=\det(1-TF_{\bar x}\mid V^H)\text{~and~} P_{\textnormal{cris}}(T)=\det(1-T\varphi^{er}\mid V_p^H),
        \]
        where $q=p^e$.

        \begin{lem}\label{lem: etale-slopes-and-crystalline-slopes}
            The polynomial $P_{\textnormal{ét}}(T)$ has $\Z$-coefficients and moreover $P_{\textnormal{ét}}(T)=P_{\textnormal{cris}}(T)$.
        \end{lem}

        \begin{proof}
            Since $H$ is a finite group, the composition of the projection map and the natural inclusion
            \[
                \mathrm{pr}\colon V\to V^H\to V
            \]
            is induced by an algebraic cycle (with $\Q$-coefficients). Moreover, the endomorphism $F_{\bar x}\colon V\to V$ is induced by an algebraic cycle. The eigenvalues of the composition $F_{\bar x}\circ\mathrm{pr}$, again induced by an algebraic cycle, is the multi-set consisting of $0$ with multiplicity $\dim(V)-\dim(V^H)$, and the eigenvalues of Frobenius on $V^H$. By \cite[Thm. 2 (2)]{Katz-Messing-Some-Consequences-of-the-Riemann-Hypothesis-for-Varieties-over-Finite-Fields}, we see that $P_\textnormal{ét}(T)$ has $\mb{Z}$-coefficients and is moreover independent of the choice of a Weil cohomology theory. In particular, we deduce that $P_{\textnormal{ét}}(T)=P_{\textnormal{cris}}(T)$.
        \end{proof}

        \begin{lem}\label{lemma: mirror-quintic-ord}
            For every valuation $v$ of $\overline{\Q}$ with $v(p)=1$, the generic $v$-slopes of $\mc{E}$ are $0,1,2,3$.
        \end{lem}

        The following proof was suggested to us by one of the referees; in the previous version we had a somewhat cumbersome argument using the Colmez--Fontaine theorem on weakly admissible filtered $\varphi$-modules. We thank the referee for this simplification.

        \begin{proof} 
            By \cref{lem: etale-slopes-and-crystalline-slopes}, we deduce that in order to compute the generic $v$-slopes of $V^H$, we are reduced to computing the generic slopes of the action of $\varphi^{er}$ on $V_p^H$.
            
            Thanks to \cite[Thm. 2.2]{Yu-Variation-of-the-Unit-Root-Along-the-Dwork-Family-of-Calabi-Yau-Varieties} the Dwork family is generically ordinary, so assume that $Y_x$ is ordinary. Applying \cite[Prop. 1.3.2]{Deligne-Cristaux-ordinaires-et-coordonnees-canoniques} there exists a decomposition
            \[
          V_p=\bigoplus_{i\geq 0}V_p^{\varphi=p^i}\otimes_{\Qp}\Qpbar
            \]
            such that for every $j\geq 0$, we have
            \begin{equation}\label{eqn:ordinary-decompose}    \bigg(\bigoplus_{i<j}V_p^{\varphi=p^i}\otimes_{\Qp}\Qpbar\bigg)\oplus\bigg(\mathrm{Fil}^j\otimes_{W(\mathbb{F}_{q^r})[1/p]}\Qpbar\bigg)=V_p,
            \end{equation}
            where $\mathrm{Fil}^j$ is the Hodge filtration induced by any choice of lift $Y_{\tilde{x}}$ of $Y_x$ to $W(\mathbb{F}_{q^r})$. Since the action of $H$ lifts to $Y_{\tilde{x}}$, it follows that $(V_p^H, \varphi|_{V_{p}^H}, \Fil^{\bullet}|_{V_{p}^H})$ is a direct summand of $(V_p, \varphi, \Fil^\bullet)$ as filtered $\varphi$-modules,  we deduce that  $V_p^H$ is also ordinary in the sense that there are analogous decompositions as \eqref{eqn:ordinary-decompose}. It follows that the slopes of Frobenius acting on $V_p^H$ are precisely the Hodge numbers of $(V_p^H, \Fil^{\bullet})$. It remains to show that the latter Hodge numbers are $0,1,2,3$: this is a well-known computation--see for example \cite[Lem.~1.5]{Harris-Shepherd-Barron-Taylor-A-Family-of-Calabi-Yau-Varieties-and-Potential-Automorphy}. 
        \end{proof}

        \begin{prop}\label{prop: F-isoc-of-Dwork-family-does-not-come-from-ab-var}
            The $\Qlbar$-local system $\mathcal{E}$ does not come from a family of abelian varieties.
        \end{prop}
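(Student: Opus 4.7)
The plan is to derive a contradiction from \cref{prop: criterion-crystalline} by exhibiting a closed point of $X := \mb{P}^1_{\mb{F}_q}\setminus\{0,1,\infty\}$ at which the $F$-isocrystal $\underline{\mf{g}}_{\mc{E}_p}$ has a Newton slope strictly less than $-1$. The first observation is that, since $\mbf{G}^{\mathrm{geo}}_{\mc{E}_p}$ is a normal subgroup of $\mbf{G}_{\mc{E}_p}$, its Lie algebra is an ideal, so $\underline{\mf{g}}^{\mathrm{geo}}_{\mc{E}_p}$ embeds as a sub-$F$-isocrystal of $\underline{\mf{g}}_{\mc{E}_p}$; in particular every Newton slope of the geometric monodromy Lie algebra occurs among the slopes of $\underline{\mf{g}}_{\mc{E}_p}$, so it is enough to produce a slope of $\underline{\mf{g}}^{\mathrm{geo}}_{\mc{E}_p}$ below $-1$.

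Next, I would identify $\underline{\mf{g}}^{\mathrm{geo}}_{\mc{E}_p}$ explicitly using the symplectic autoduality $\mc{E}_p \otimes \mc{E}_p \to \Qpbar(-3)$ coming from Poincar\'e duality on the middle cohomology of the threefold fibers of $\pi$. By \cref{lma: monodromy-of-Dwork-family} the geometric monodromy is $\mbf{Sp}_{4,\Qpbar}$, so $\underline{\mf{g}}^{\mathrm{geo}}_{\mc{E}_p}$ is identified with the symplectic Lie subalgebra $\mf{sp}(\mc{E}_p) \subset \mathrm{End}(\mc{E}_p)$. Since the adjoint representation of $\mbf{Sp}_{2n}$ is $\mathrm{Sym}^2$ of the standard representation, tracking the Tate twist yields an $F$-equivariant isomorphism
\[
    \underline{\mf{g}}^{\mathrm{geo}}_{\mc{E}_p} \;\simeq\; \mathrm{Sym}^2(\mc{E}_p)(3).
\]

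Finally, by \cref{lemma: mirror-quintic-ord} there exists a closed point $x$ at which $\mc{E}_{p,x}$ has Newton slopes $\{0,1,2,3\}$. At such a point the slopes of $\mathrm{Sym}^2(\mc{E}_{p,x})$ form the multiset of sums $s_i + s_j$ with $1 \le i \le j \le 4$, namely $\{0,1,2,2,3,3,4,4,5,6\}$, and the Tate twist by $(3)$ shifts each entry down by $3$, so the slopes of $\underline{\mf{g}}^{\mathrm{geo}}_{\mc{E}_p}$ at $x$ are
\[
    \{-3,-2,-1,-1,0,0,1,1,2,3\}.
\]
The presence of the slope $-3 < -1$ contradicts \cref{prop: criterion-crystalline}, giving the desired conclusion. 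The main obstacle is verifying the identification $\underline{\mf{g}}^{\mathrm{geo}}_{\mc{E}_p} \simeq \mathrm{Sym}^2(\mc{E}_p)(3)$ as $F$-isocrystals with the correct Tate twist; one can alternatively sidestep this by choosing a slope-adapted basis of $\mc{E}_{p,x}$ and reading off the ten slopes of $\mf{sp}(\mc{E}_{p,x})$ directly from the coordinate expression of the symplectic condition.
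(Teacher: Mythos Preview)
Your argument is correct and arrives at the same contradiction as the paper, but via a somewhat different route. The paper does not identify $\underline{\mf{g}}^{\mathrm{geo}}_{\mc{E}_p}$ with $\mathrm{Sym}^2(\mc{E}_p)(3)$ or compute its ten Newton slopes explicitly. Instead, it only embeds $\underline{\mf{g}}^{\mathrm{geo}}_{\mc{E}_p,x}$ into $\mathrm{End}(\mc{E}_{p,x})$, tabulates the slopes of the latter ($-3,-2,-1,0,1,2,3$ with multiplicities $1,2,3,4,3,2,1$), and uses tracelessness of $\mathfrak{sp}_4$ to bound the slope-$0$ multiplicity by~$3$. If $\mc{E}_p$ came from abelian varieties, the criterion together with self-duality of $\underline{\mf{g}}^{\mathrm{geo}}_{\mc{E}_p,x}$ would confine all slopes to $\{-1,0,1\}$, forcing $\dim\underline{\mf{g}}^{\mathrm{geo}}_{\mc{E}_p,x}\le 3+3+3=9$, contradicting $\dim\mathfrak{sp}_4=10$.

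Your approach is sharper: it pins down the entire slope multiset $\{-3,-2,-1,-1,0,0,1,1,2,3\}$, so the violating slope $-3$ is visible directly (and one sees the paper's bound of $9$ is not tight, the actual count in $\{-1,0,1\}$ being $6$). The cost is the identification $\underline{\mf{g}}^{\mathrm{geo}}_{\mc{E}_p}\simeq\mathrm{Sym}^2(\mc{E}_p)(3)$ as $F$-isocrystals, which you correctly flag; it follows because the Poincar\'e pairing $\mc{E}_p\otimes\mc{E}_p\to\Qpbar(-3)$ is Frobenius-equivariant, giving $\mathrm{End}(\mc{E}_p)\simeq(\mc{E}_p\otimes\mc{E}_p)(3)$ as $F$-isocrystals, under which $\mf{sp}$ corresponds to the symmetric square. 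The paper's argument sidesteps this by using only the ambient slopes of $\mathrm{End}(\mc{E}_{p,x})$ together with the single constraint $\mathfrak{sp}_4\subset\mathfrak{sl}_4$; it is therefore softer, though less informative. Your explicit reduction from $\underline{\mf{g}}_{\mc{E}_p}$ to $\underline{\mf{g}}^{\mathrm{geo}}_{\mc{E}_p}$ via normality is also a point the paper uses implicitly without spelling out.
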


        \begin{proof}
            By \cref{lma: monodromy-of-Dwork-family}, we have $\mathfrak{g}^{\rm{geo}}_{\mc{E}}\simeq\mathfrak{sp}_{4,\Qlbar}$. Note that there exists a canonical isomorphism of $\Qlbar$-local systems
            \[
                \rm{det}(\mc{E})^{\otimes-\frac{1}{2}}\otimes \rm{Sym}^2(\mc{E})\simeq\underline{\mathfrak{g}}_{\mathcal{E}}^\mathrm{geo}.
            \]
            Thus, it suffices to compute the generic slopes of
            \[
                \rm{det}(\mc{E})^{\otimes-\frac{1}{2}}\otimes \rm{Sym}^2(\mc{E})\simeq\rm{Sym}^2(\mc{E})(3).
            \]
            By \cref{lemma: mirror-quintic-ord}, the generic slopes of $\mc{E}$ are $0,1,2,3$, each occuring with multiplicity one. Therefore, the generic slopes of $\rm{Sym}^2(\mc{E})(3)$ are as follows:

            \begin{center}
                \begin{tabular}{ c|c|c|c|c|c|c|c }
                    slope & $-3$ & $-2$ & $-1$ & $0$ & $1$ & $2$ & $3$\\
                    \hline
                    multiplicity & $1$ & $1$ & $2$ & $2$ & $2$ & $1$ & $1$
                \end{tabular}
            \end{center}

            Applying \cref{prop: criterion-crystalline} shows that $\mc{E}$ cannot come from a family of abelian varieties.
        \end{proof}

        \begin{rmk}
            As pointed out by the referee, since our criterion is checked generically on  $\mb{P}^1_{\mb{F}_q}\setminus \{0,1,\infty\}$,  we actually get the technically stronger statement that for any Zariski dense open $U\subset \mb{P}^1_{\mb{F}_q}\setminus \{0,1, \infty\}$, $\mc{E}|_{U}$ does not come from a family of abelian varieties. 
        \end{rmk}

        \begin{rmk}[More examples of local systems violating \cref{prop: criterion-crystalline}]\label{rmk: more-examples}
            The following example was pointed out to us by the referee. Suppose $p\geq 5$, and  take $\mc{V}$ to be any of the local systems in  \cite[Prop. A.2.1]{Drinfeld-Kedlaya-Slopes-of-Indecomposable-F-Isocrystals}. Then $\mc{V}$ is a rank three local system on $\mb{P}^1_{\mb{F}_p}\setminus \{0,1,\infty\}$. It has  generic slopes $0,1,2$, by Cor. A.3.2(e) of loc.cit.. Moreover,  it has maximally unipotent monodromy at $0\in \mb{P}^1$ (see Remark A.1.4 of loc.cit.), and therefore the connected component of its geometric monodromy has Lie algebra  $\mathfrak{sl}_3$, which has generic slopes
            
            \begin{center}
                \begin{tabular}{ c|c|c|c|c|c }
                    slope & $-2$ & $-1$ & $0$ & $1$ & $2$ \\
                    \hline
                    multiplicity & $1$ & $2$ & $2$ & $2$ & $1$
                \end{tabular},
            \end{center}
            and  we therefore conclude by   \cref{prop: criterion-crystalline}.

            In fact, this example is closely related to the one we gave in \cref{prop: F-isoc-of-Dwork-family-does-not-come-from-ab-var}: they are both examples of \emph{hypergeometric local systems}.
        \end{rmk}
    
    \subsection{General case}
        We now come to the proof of \cref{thm: main-result}. 

        \begin{proof}[Proof of \cref{thm: main-result}]
            Take any finite separable morphism $C\to\mathbb{P}^1_{\mathbb{F}_q}$. Let $S\subseteq C$ be the finite set of points mapping to $\{0,1,\infty\}$ and consider the induced morphism $f\colon C\setminus S\to\mathbb{P}^1_{\mathbb{F}_q}\setminus\{0,1,\infty\}$. Write $\mc{F}=f^\ast\mathcal{E}$, where $\mc{E}$ is the $\Qlbar$-local system \eqref{eqn: mirrorquintic} (if $p\neq 5$) or the $\Qlbar$-local system from \cref{rmk: more-examples} (if $p\neq 2,3$). For every closed point $x$ of $C\setminus S$ we have $\underline{\mathfrak{g}}_{\mathcal{F},\bar x}\simeq \underline{\mathfrak{g}}_{\mathcal{E},\overline{f(x)}}$, where $\bar x,\overline{f(x)}$ denote  geometric points lying over $x,f(x)$ respectively. Hence $\mathcal{F}$ does not come from a family of abelian varieties over $C\setminus S$ by \cref{prop: criterion-crystalline} and \cref{prop: F-isoc-of-Dwork-family-does-not-come-from-ab-var}, respectively \cref{rmk: more-examples}.
        \end{proof}
	\printbibliography
\end{document}